\documentclass[10pt]{article}

\usepackage{amsmath,amssymb,amsthm,bm}
\usepackage{geometry}
\usepackage{booktabs}
\usepackage{enumitem}
\usepackage{authblk}
\usepackage[colorlinks=true,linkcolor=blue,citecolor=blue,urlcolor=blue]{hyperref}

\newtheorem{theorem}{Theorem}[section]
\newtheorem{lemma}[theorem]{Lemma}

\newtheorem{corollary}[theorem]{Corollary}
\theoremstyle{remark}
\newtheorem{remark}[theorem]{Remark}
\newtheorem{conjecture}[theorem]{\bf Conjecture}
\usepackage{xcolor}

\newcommand{\R}{\mathbb{R}}
\newcommand{\T}{\mathsf{T}}
\newcommand{\rank}{\operatorname{rank}}
\newcommand{\diag}{\operatorname{diag}}
\allowdisplaybreaks[4]
\title{A proof of the interlacing conjecture for the singular values of Sylvester–Kac matrices}
\author[1]{Fuying Tang}
\author[2]{Shuo Shi\thanks{Corresponding author. E-mail: tangfy@shu.edu.cn (Fuying Tang); shishuomath@foxmail.com (Shuo Shi)}}
\affil[1]{Department of Mathematics, Shanghai University, Shanghai, 200444, People’s Republic of China}
\affil[2]{School of Mathematics and Computational Science, Xiangtan University, Xiangtan, Hunan,  411105, People’s Republic of China}
\date{}

\begin{document}

		\maketitle

\begin{abstract}
Boros and R\'ozsa (Linear Algebra Appl. 421 (2007) 407--416) reduced the singular value problem for the odd-order Sylvester--Kac matrix $P\in\mathbb{R}^{(2m+1)\times(2m+1)}$ to those for $Q^{(m)}\in\mathbb{R}^{(m+1)\times m}$ and 	$S^{(m)}\in\mathbb{R}^{m\times(m+1)}$ via a permutation similarity transformation. 
They noted that $(Q^{(m)})^{\T}Q^{(m)}-S^{(m)}(S^{(m)})^{\T}$ is positive definite, and also conjectured that $\sigma_i(S^{(m)})\ge\sigma_{i+1}(Q^{(m)})$ for $1\le i\le m-1$ and $m\ge2$.
In this paper, we prove that these inequalities are strict and establish the strict interlacing relation 	
\begin{equation*}
\sigma_1(Q^{(m)})>\sigma_1(S^{(m)})>
\cdots>\sigma_m(Q^{(m)})>
\sigma_m(S^{(m)})>0
\end{equation*}
for all $m\ge1$.
Here, $\sigma_i(\cdot)$ denotes the $i$-th largest singular value of a matrix.
~\\
\textbf{MSC:} 
15A45; 
11C20;
15A18
~\\	
\textbf{Keywords:} 
Sylvester--Kac matrix;
singular value;
interlacing inequalities
\end{abstract}

\section{Introduction}
	
Let $\R$ denote the field of real numbers, and let $\R^{n\times m}$ denote the set of all $n\times m$ real matrices. 
For a real matrix $A=(A_{ij})\in\R^{n\times m}$, $A_{ij}$ denotes the entry in the $i$-th row and $j$-th column of $A$. 
$A^{\T}$ and $\rank A$ denote its transpose and rank, respectively. 
When $n=m$, $\operatorname{spec}(A)$ denotes its spectrum. 
We write $I_r$ for the $r\times r$ identity matrix and $\diag(a_1,\ldots,a_r)$ for the diagonal matrix with diagonal entries $a_1,\ldots,a_r$. 
The eigenvalues of a real symmetric matrix 	$A\in\mathbb R^{n\times n}$ are arranged in nonincreasing order as 	$\lambda_1(A)\ge\lambda_2(A)\ge\cdots\ge\lambda_n(A)$.
For a matrix $A\in\mathbb R^{n\times m}$, let 	$k=\min\{n,m\}$.
Its singular values are arranged in nonincreasing order as 	$\sigma_1(A)\ge\sigma_2(A)\ge\cdots\ge\sigma_k(A)\ge0$.
For a real symmetric matrix $A\in \R^{n\times n}$, we write $A\succeq0$ and $A\succ0$ to mean that $A$ is positive semidefinite and positive definite, respectively.
	
The Sylvester--Kac matrix is a classical tridiagonal matrix whose spectral properties have been studied for more than a century.
The Sylvester--Kac matrix originates in Sylvester's work on determinants and was later studied by Kac.
Its eigenvalues and eigenvectors admit explicit descriptions~\cite{Sylvester1854,Kac1947,ChuKilic2024}. 
Subsequent studies have extended the spectral theory of Sylvester--Kac type matrices in both structural and applied directions. 
Bevilacqua and Bozzo identified a four-dimensional space of tridiagonal matrices containing the classical Sylvester--Kac matrix and admitting simultaneous 
triangularization~\cite{BevilacquaBozzo2009}, while da Fonseca et al. derived a Sylvester--Kac type matrix from a deposition and evaporation model on a finite array and used its eigenpairs to describe the model's time-dependent solution~\cite{daFonseca2013}. 
More recent studies have produced a variety of explicitly solvable Sylvester--Kac-type matrices, including families with square-integer or quadratic spectra, structured Jacobi realizations, and periodic perturbations \cite{VaiaSpadini2020,daFonsecaKilic2021,DuDaFonseca2024,DuDaFonseca2025,EncinasEtAl2025}. 
Thus, the eigenvalue structure of the Sylvester--Kac matrix and many of its variants is now relatively well understood. 
By contrast, the corresponding singular value problem is substantially less transparent. 
In particular, the standard Sylvester--Kac matrix is generally nonnormal, so its singular values cannot be recovered directly from its explicitly known eigenvalues.
	
For a positive integer $m$, the $(2m+1)\times(2m+1)$ Sylvester--Kac matrix $P\in\R^{(2m+1)\times(2m+1)}$ is given by
\begin{equation}\label{eqSKP}
		P=
		\begin{bmatrix}
			0&2m&&&&0\\
			1&0&2m-1&&&\\
			&2&0&2m-2&&\\
			&&\ddots&\ddots&\ddots&\\
			&&&2m-1&0&1\\
			0&&&&2m&0
		\end{bmatrix}.
\end{equation}
Equivalently,
\begin{equation*}
		P_{ij}=
		\begin{cases}
			2m-i+1, & i-j=-1,\\
			j,      & i-j=1,\\
			0,      & \text{otherwise}.
		\end{cases}
\end{equation*}
Boros and R\'ozsa~\cite{BorosRozsa2007} investigated the singular values of the odd-order Sylvester--Kac matrix $P$ defined above.
Using a permutation similarity transformation, they reduced the singular value problem for $P$ to those for two rectangular matrices $Q^{(m)}$ and $S^{(m)}$.
Specifically, let $e_i$ denote the $i$-th standard basis vector of $\R^{2m+1}$, and define the permutation matrix
\begin{equation*}
 	\Pi=[e_1,e_3,\ldots,e_{2m+1},e_2,e_4,\ldots,e_{2m}].
\end{equation*}
Then
\begin{equation}
 	\Pi^{\T}P\Pi=
 	\begin{bmatrix}
 		0&Q^{(m)}\\
 		S^{(m)}&0
 	\end{bmatrix},
 	\label{eq:block-decomposition}
\end{equation}
where
\begin{equation}
 	\begin{aligned}
 		Q^{(m)}
 		&=2
 		\begin{bmatrix}
 			m&&&&0\\
 			1&m-1&&&\\
 			&2&m-2&&\\
 			&&\ddots&\ddots&\\
 			&&&m-1&1\\
 			0&&&&m
 		\end{bmatrix}
 		\in\R^{(m+1)\times m},
 	\end{aligned}
 	\label{eq:Q-definition}
\end{equation}
equivalently,
\begin{equation*}
 	\begin{aligned}
 		Q^{(m)}_{ij}
 		&=\begin{cases}
 			2(m-i+1), & i-j=0,\\
 			2j,       & i-j=1,\\
 			0,        & \text{otherwise},
 		\end{cases}
 	\end{aligned}
\end{equation*}
and
\begin{equation}
 	\begin{aligned}
 		S^{(m)}
 		&=
 		\begin{bmatrix}
 			1&2m-1&&&0\\
 			&3&2m-3&&\\
 			&&\ddots&\ddots&\\
 			0&&&2m-1&1
 		\end{bmatrix}
 		\in\R^{m\times(m+1)},
 	\end{aligned}
 	\label{eq:S-definition}
\end{equation}
equivalently,
\begin{equation*}
 	\begin{aligned}
 		S^{(m)}_{ij}
 		&=\begin{cases}
 			2i-1,       & i-j=0,\\
 			2m-2i+1,    & i-j=-1,\\
 			0,          & \text{otherwise}.
 		\end{cases}
 	\end{aligned}
\end{equation*}
Since $\Pi$ is orthogonal, $P$ and $\Pi^{\T}P\Pi$  have the same singular values.
Moreover, \eqref{eq:block-decomposition} gives
\begin{equation}
 	(\Pi^{\T}P\Pi)^{\T}(\Pi^{\T}P\Pi)
 	=
 	\begin{bmatrix}
 		(S^{(m)})^{\T}S^{(m)}&0\\
 		0&(Q^{(m)})^{\T}Q^{(m)}
 	\end{bmatrix}.
 	\label{eq:block-gram}
\end{equation}
The matrix $Q^{(m)}$ has full column rank, and  $S^{(m)}$ has full row rank.
Consequently, as observed by Boros and R\'ozsa~\cite{BorosRozsa2007}, the singular values of $P$ consist of the $m$ positive singular values of $Q^{(m)}$, the $m$ positive singular values of $S^{(m)}$, and one zero singular value.
Using generating functions and a second-order differential equation, they further obtained an explicit formula for the singular values of $Q^{(m)}$.
Boros and R\'ozsa~\cite{BorosRozsa2007} also observed that 
$$(Q^{(m)})^{\T}Q^{(m)}-S^{(m)}(S^{(m)})^{\T}$$
is positive definite. 
This implies that
\begin{equation}\label{lower order}
	\sigma_i(Q^{(m)})>\sigma_i(S^{(m)}),~1\leq i\leq m,
\end{equation}
while \cite{BorosRozsa2007} states the non-strict form.
They further conjectured that the following result also holds.

	\begin{conjecture} \rm \cite{BorosRozsa2007}
		\label{conj:boros-interlacing}
		Let $Q^{(m)}$ and $S^{(m)}$ be defined by~\eqref{eq:Q-definition} and \eqref{eq:S-definition}, respectively. Then for any integer $m\geq 2$,
		\begin{equation}\label{conjecture inequalities}
			\sigma_i(S^{(m)})\geq\sigma_{i+1}(Q^{(m)}),~
			1\leq i\leq m-1.
		\end{equation}
	\end{conjecture}
	
However, this conjecture has remained open.
The remainder of the paper is organized as follows.
In Section~\ref{Mainresults}, we show that the inequalities
in \eqref{conjecture inequalities} hold strictly, thereby proving
Conjecture~\ref{conj:boros-interlacing}.
	
\section{Main results}
\label{Mainresults}
	
In this section, we prove that Conjecture~\ref{conj:boros-interlacing} holds.
First, we introduce some necessary tools and results.
Using the standard relation between eigenvalues and singular values, we have
\begin{equation}
		\lambda_i(\mathcal G^{(m)})=\sigma_i(Q^{(m)})^2,~
		\lambda_i(\mathcal H^{(m)})=\sigma_i(S^{(m)})^2,~1\leq i\leq m,
		\label{eq:eigen-singular-order}
\end{equation}
where
\begin{equation*}
		\mathcal G^{(m)}=(Q^{(m)})^{\T}Q^{(m)},~
		\mathcal H^{(m)}=S^{(m)}(S^{(m)})^{\T}.
\end{equation*}
In fact, both matrices are positive definite.
Thus, to prove that the inequalities in \eqref{conjecture inequalities} hold strictly, it suffices to show that
\begin{equation*}
		\begin{aligned}
			\lambda_i\bigl(\mathcal H^{(m)}\bigr)
			>
			\lambda_{i+1}\bigl(\mathcal G^{(m)}\bigr),~1\leq i\leq m-1,~m\geq 2.
		\end{aligned}
\end{equation*}
By direct calculation, the entries of the symmetric tridiagonal matrices $\mathcal G^{(m)}=(\mathcal G^{(m)}_{ij})\in\mathbb{R}^{m\times m}$ and $\mathcal H^{(m)}=(\mathcal H^{(m)}_{ij})\in\mathbb{R}^{m\times m}$ are given by
\begin{equation}
			\mathcal G^{(m)}_{ij}=
			\begin{cases}
				4\bigl((m-i+1)^2+i^2\bigr), & i=j,\\
				4i(m-i), & j=i+1,\\
				4j(m-j), & i=j+1,\\
				0, & \text{otherwise},
			\end{cases}
			\label{eq:G-entries}
\end{equation}
and
\begin{equation}
			\mathcal H^{(m)}_{ij}=
			\begin{cases}
				(2i-1)^2+(2m-2i+1)^2, & i=j,\\
				(2i+1)(2m-2i+1),      & j=i+1,\\
				(2j+1)(2m-2j+1),      & i=j+1,\\
				0,                     & \text{otherwise}.
			\end{cases}
			\label{eq:H-entries}
\end{equation}
	
The following two standard results will be used to prove Conjecture~\ref{conj:boros-interlacing}.
	
\begin{lemma}\rm\cite{Horn2012} 
\label{lem:loewner-monotonicity}
Let $A,B\in\R^{n\times n}$ be real symmetric matrices.
If $A-B\succeq0$, then
		\begin{equation*}
			\lambda_i(A)\geq\lambda_i(B),~1\leq i\leq n.
\end{equation*}
\end{lemma}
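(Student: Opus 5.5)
The plan is to derive the statement from the Courant--Fischer min-max characterization of eigenvalues of real symmetric matrices. For $A\in\R^{n\times n}$ symmetric and $1\le i\le n$,
\begin{equation*}
\lambda_i(A)=\max_{\dim V=i}\ \min_{0\ne x\in V}\frac{x^{\T}Ax}{x^{\T}x}.
\end{equation*}
The first step is to record a quadratic-form inequality. Set $A-B=C\succeq0$. Then for every $x\ne0$,
\begin{equation*}
x^{\T}Ax=x^{\T}Bx+x^{\T}Cx\ge x^{\T}Bx,
\end{equation*}
so the Rayleigh quotient of $A$ dominates that of $B$ pointwise on $\R^n\setminus\{0\}$.

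The second step transfers this pointwise inequality through the min and the max. Fix an $i$-dimensional subspace $V$. Taking the minimum over $0\ne x\in V$ preserves the inequality, giving
\begin{equation*}
\min_{0\ne x\in V}\frac{x^{\T}Ax}{x^{\T}x}\ge\min_{0\ne x\in V}\frac{x^{\T}Bx}{x^{\T}x}.
\end{equation*}
Taking the maximum over all $V$ with $\dim V=i$ also preserves it. By Courant--Fischer, the two sides then become $\lambda_i(A)$ and $\lambda_i(B)$, so $\lambda_i(A)\ge\lambda_i(B)$.

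A concrete alternative avoids the full max-min argument. Let $u_1,\dots,u_i$ be orthonormal eigenvectors of $B$ for $\lambda_1(B),\dots,\lambda_i(B)$, and let $V=\operatorname{span}\{u_1,\dots,u_i\}$. For unit $x\in V$ we have $x^{\T}Bx\ge\lambda_i(B)$, and hence $x^{\T}Ax\ge\lambda_i(B)$. Now let $W$ be the span of the eigenvectors of $A$ for $\lambda_i(A),\dots,\lambda_n(A)$; it has dimension $n-i+1$. Since $\dim V+\dim W=n+1$, the subspaces meet nontrivially. A unit vector $x\in V\cap W$ satisfies $x^{\T}Ax\le\lambda_i(A)$, and combining the two bounds gives $\lambda_i(A)\ge\lambda_i(B)$.

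This is a standard result with no real obstacle. The only point needing care is the dimension count that forces $V\cap W\neq\{0\}$, together with the correct ordering convention for the eigenvalues.
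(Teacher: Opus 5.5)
Your proof is correct: passing the pointwise inequality $x^{\T}Ax\ge x^{\T}Bx$ through the Courant--Fischer max--min characterization gives $\lambda_i(A)\ge\lambda_i(B)$ for the paper's nonincreasing ordering, and so does your subspace-intersection argument using $\dim V+\dim W=n+1$. The paper gives no proof of its own and cites Horn and Johnson, where this monotonicity result is derived in essentially the same way, via Courant--Fischer or Weyl's inequality; there is nothing further to compare.
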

	
\begin{lemma}\rm\cite{Horn2012} 
\label{lem:rectangular-gram}
Let $A\in\R^{n\times m}$ and $B\in\R^{m\times n}$ with $n\le m$.  
Then the $m$ eigenvalues of $BA$ consist of the $n$ eigenvalues of $AB$ with the same algebraic multiplicities, together with $m-n$ additional zeros.
\end{lemma}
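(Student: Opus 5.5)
The plan is to prove the statement through an identity between characteristic polynomials,
\begin{equation*}
\det(\lambda I_m-BA)=\lambda^{m-n}\det(\lambda I_n-AB),
\end{equation*}
since the eigenvalues of a square matrix, counted with algebraic multiplicity, are exactly the roots of its characteristic polynomial counted with multiplicity. Once this identity is established, every nonzero root has the same multiplicity on both sides. The root $0$ has multiplicity on the left equal to its multiplicity for $AB$ plus $m-n$. This is precisely the claim: the $m$ eigenvalues of $BA$ are the $n$ eigenvalues of $AB$ together with $m-n$ additional zeros.

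To obtain the identity, I will use a block similarity in dimension $n+m$. Define
\begin{equation*}
M_1=\begin{bmatrix} AB & 0\\ B & 0\end{bmatrix},\qquad
M_2=\begin{bmatrix} 0 & 0\\ B & BA\end{bmatrix},\qquad
T=\begin{bmatrix} I_n & A\\ 0 & I_m\end{bmatrix}.
\end{equation*}
The matrix $T$ is invertible, with inverse obtained by replacing $A$ by $-A$. A direct block multiplication gives
\begin{equation*}
M_1T=\begin{bmatrix} AB & ABA\\ B & BA\end{bmatrix}=TM_2 .
\end{equation*}
Hence $M_2=T^{-1}M_1T$, so $M_1$ and $M_2$ are similar and have the same characteristic polynomial.

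Both $M_1$ and $M_2$ are block lower triangular, so their characteristic polynomials factor as products over the diagonal blocks:
\begin{equation*}
\det(\lambda I_{n+m}-M_1)=\det(\lambda I_n-AB)\,\lambda^{m},\qquad
\det(\lambda I_{n+m}-M_2)=\lambda^{n}\det(\lambda I_m-BA).
\end{equation*}
Equating the two expressions gives $\lambda^m\det(\lambda I_n-AB)=\lambda^n\det(\lambda I_m-BA)$ as polynomials in $\lambda$. Because $n\le m$, we may cancel $\lambda^n$, since $\mathbb R[\lambda]$ is an integral domain. This yields the desired identity.

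No step is a real obstacle. The only points needing care are verifying the block product $M_1T=TM_2$ correctly and noting that the conclusion concerns algebraic (not geometric) multiplicities, which is exactly what the characteristic polynomial identity controls.
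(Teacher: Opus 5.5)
Your proof is correct: the block products $M_1T=TM_2$ agree, the block-triangular factorizations of the characteristic polynomials are right, and cancelling $\lambda^n$ in $\R[\lambda]$ is legitimate since $n\le m$. The paper does not prove this lemma itself but cites Horn and Johnson, where it is established by essentially this same block-similarity argument, so your approach matches the intended source.
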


The following singular value formula was obtained by
Boros and R\'ozsa~\cite{BorosRozsa2007}.
We also record the relations needed below.

\begin{theorem}\label{Boros--Rozsa formulaa}
Let $Q^{(m)}\in\mathbb R^{(m+1)\times m}$ be defined by \eqref{eq:Q-definition}.
Then for any integer $m\ge1$,
\begin{equation}\label{eq:Q-singular-values}
    \sigma_i(Q^{(m)})
    =2\sqrt{(m-i+1)(m+i)},~
1\le i\le m.
\end{equation}
Consequently,
\begin{align}
			\lambda_i(\mathcal G^{(m)})
			&=8m+\lambda_i(\mathcal G^{(m-1)}),~1\leq i\leq m-1,~m\ge2
			\label{eq:G-eigenvalue-recurrence}\\
			\lambda_m(\mathcal G^{(m)})&=8m,~m\ge1.
			\label{eq:G-smallest-eigenvalue}
		\end{align}
\end{theorem}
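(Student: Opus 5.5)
We will prove that the spectrum of the tridiagonal matrix $\mathcal G^{(m)}$ is $\{4(m-i+1)(m+i): 1\le i\le m\}$. By \eqref{eq:eigen-singular-order} this gives \eqref{eq:Q-singular-values}. The relations \eqref{eq:G-eigenvalue-recurrence} and \eqref{eq:G-smallest-eigenvalue} then follow by arithmetic. For \eqref{eq:G-smallest-eigenvalue}, take $i=m$, which gives $4\cdot1\cdot 2m=8m$. For \eqref{eq:G-eigenvalue-recurrence},
\[
4(m-i+1)(m+i)-4(m-i)(m-1+i)=8m,
\]
after expanding both products. The candidate values are pairwise distinct, because $(m-i+1)(m+i)=m^2+m-i^2+i$ is strictly decreasing in $i\ge1$. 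So the labels $\lambda_i$ match the indexing exactly.

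To obtain the spectrum, we can avoid the generating function/ODE argument of \cite{BorosRozsa2007} and instead exhibit an invariant flag. The plan is to find a basis in which $\mathcal G^{(m)}$, or $Q^{(m)}$ together with a companion matrix, becomes triangular.

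A natural route mirrors the Sylvester--Kac theory. Identify $\R^{m+1}$ with polynomials of degree at most $m$ and $\R^m$ with polynomials of degree at most $m-1$, using the monomial coefficient basis. Then $Q^{(m)}$ acts as a first-order differential operator, for instance $f\mapsto 2\bigl(m f - x f' + x^2 f'\bigr)$ up to a choice of ordering and conventions. Its transpose $(Q^{(m)})^{\T}$ also acts as a first-order differential operator. The composite $\mathcal G^{(m)}=(Q^{(m)})^{\T}Q^{(m)}$ is then a second-order operator, and we will choose the variable so that it has hypergeometric (Jacobi) form. For a suitable substitution such as $x\mapsto$ a Möbius or linear change, this operator maps polynomials of degree $k$ to polynomials of degree at most $k$.

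The eigenvalue on degree $k$ is the coefficient obtained from the leading term. This coefficient is a quadratic in $k$, which we expect to be $4(m-k)(m+k+1)$ for $k=0,\dots,m-1$. Setting $k=i-1$ reproduces the claimed list.

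We will verify the eigenvalues by computing $\mathcal G^{(m)}$ directly from \eqref{eq:G-entries} on the chosen basis. An alternative is to check that the characteristic polynomials $p_m(\lambda)=\det(\lambda I-\mathcal G^{(m)})$ satisfy the three-term recurrence of the tridiagonal structure. That check is by induction on the leading principal minors, and it vanishes at the claimed values.

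The main obstacle is the first step: finding the change of basis that triangularizes the operator. The operator must preserve degree, and the degree-preserving property depends delicately on the coefficients $4i(m-i)$ and $4((m-i+1)^2+i^2)$. If the direct differential-operator identification is awkward, we will fall back on citing the formula of Boros and R\'ozsa~\cite{BorosRozsa2007} for \eqref{eq:Q-singular-values}, since the theorem is attributed to them. In that case the only new content to prove is the arithmetic deduction of \eqref{eq:G-eigenvalue-recurrence} and \eqref{eq:G-smallest-eigenvalue} above, together with the distinctness of the values, which ensures the ordering.
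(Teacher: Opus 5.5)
Your derivation of \eqref{eq:G-eigenvalue-recurrence} and \eqref{eq:G-smallest-eigenvalue} from \eqref{eq:Q-singular-values} is correct. That includes the monotonicity of $m^2+m-i^2+i$, which fixes the labelling. Your fallback of citing Boros and R\'ozsa for \eqref{eq:Q-singular-values} is exactly what the paper does: it gives no independent proof of the singular value formula and gets the two consequences by the same arithmetic. So the part of your proposal that is actually a proof coincides with the paper.

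The self-contained route, however, is only a plan. The key change of variables is never produced, and the eigenvalues are only ``expected''. The operator you write for $Q^{(m)}$ is also wrong: $f\mapsto 2(mf-xf'+x^2f')$ sends $x^k$ to $2(m-k)x^k+2kx^{k+1}$, not $2(m-k)x^k+2(k+1)x^{k+1}$. The characteristic-polynomial alternative is not an argument either, since the three-term recurrence alone does not show that $p_m$ vanishes at the claimed points.

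The route does close, and easily. In the coefficient basis $x^k\leftrightarrow e_{k+1}$ one has $Q^{(m)}f=2\bigl((m+x)f+x(x-1)f'\bigr)$ and $(Q^{(m)})^{\T}g=2\bigl(mg+(1-x)g'\bigr)$. With $y=1-x$ this gives $(Q^{(m)})^{\T}y^k=2(m-k)y^k$ and $Q^{(m)}y^k=2(m+k+1)y^k-2(k+1)y^{k+1}$, hence
\[
\mathcal G^{(m)}(1-x)^k=4(m-k)(m+k+1)(1-x)^k-4(k+1)(m-k-1)(1-x)^{k+1},\qquad 0\le k\le m-1 .
\]
So $\mathcal G^{(m)}$ is lower bidiagonal in the basis $\{(1-x)^k\}_{k=0}^{m-1}$. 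Its spectrum is $\{4(m-k)(m+k+1)\}$, which is \eqref{eq:Q-singular-values} with $k=i-1$.

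Note that the invariant flag is by order of vanishing at $x=1$, not by degree; the linear change $y=1-x$ alone raises degree. A degree-nonincreasing, hypergeometric form appears only after the M\"obius substitution $z=1/(1-x)$ with gauge factor $(1-x)^{m-1}$. There $\mathcal G^{(m)}$ becomes $4\bigl[z(1-z)\partial_z^2+(1-m)(1-2z)\partial_z+2m\bigr]$. Completed this way, your route gives a short, elementary proof of \eqref{eq:Q-singular-values}. It avoids the generating-function/ODE argument that the paper takes on citation.
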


In fact, Theorem \ref{Boros--Rozsa formulaa} gives a recurrence relation among the eigenvalues of $\mathcal G^{(m)}$; 
hence we hope for a similar result for $\mathcal H^{(m)}$. 
Next, we carry out an explicit decomposition of $\mathcal H^{(m)}$.
 	
\begin{lemma} 
\label{lem:H-reductions}
Let $\mathcal H^{(m)}$ and $\mathcal H^{(m-1)}$ be defined by~\eqref{eq:H-entries}. 
Then for any integer $m\geq2$,
\begin{align}
			&\mathcal H^{(m)}
			=(L^{(m)})^{\T}L^{(m)}
			+\diag(1,0,\ldots,0,1),                             \label{eq:H-first-reduction}\\
			&L^{(m)}(L^{(m)})^{\T}
			=\mathcal H^{(m-1)}+8mI_{m-1},                    \label{eq:H-second-reduction}
\end{align}
where $L^{(m)}=(L^{(m)}_{ij}) \in \R^{(m-1)\times m}$ is the upper bidiagonal matrix defined by
\begin{equation}\label{explicit-form-L}
			L^{(m)}_{ij}=
			\begin{cases}
				2m-2i+1, & i=j,\\
				2i+1,    & j=i+1,\\
				0,       & \text{otherwise}.
			\end{cases}
\end{equation}
\end{lemma}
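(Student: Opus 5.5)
This is a direct entrywise verification. Both identities compare symmetric tridiagonal matrices: the left sides are Gram matrices of a bidiagonal matrix, and the right sides are tridiagonal by \eqref{eq:H-entries}. So it suffices to check the diagonal entries and the first superdiagonal entries on each side.

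For \eqref{eq:H-first-reduction}, note that $L^{(m)}$ has rows $i=1,\dots,m-1$. Row $i$ has entries $a_i:=2m-2i+1$ in column $i$ and $b_i:=2i+1$ in column $i+1$. Hence $(L^{(m)})^{\T}L^{(m)}$ is the $m\times m$ tridiagonal matrix with
\begin{equation*}
\bigl((L^{(m)})^{\T}L^{(m)}\bigr)_{jj}=a_j^2\,[j\le m-1]+b_{j-1}^2\,[j\ge2],\qquad
\bigl((L^{(m)})^{\T}L^{(m)}\bigr)_{j,j+1}=a_jb_j .
\end{equation*}
Here $[\cdot]$ denotes the indicator; I will write this out by cases $j=1$, $2\le j\le m-1$, $j=m$ in the actual proof.
\begin{itemize}
\item Off the diagonal, $a_jb_j=(2m-2j+1)(2j+1)$, which matches $\mathcal H^{(m)}_{j,j+1}$ exactly.
\item For interior $j$, $b_{j-1}=2j-1$, so the diagonal entry is $(2m-2j+1)^2+(2j-1)^2=\mathcal H^{(m)}_{jj}$.
\item For $j=1$, only $a_1^2=(2m-1)^2$ appears, while $\mathcal H^{(m)}_{11}=1+(2m-1)^2$. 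The gap is $1$.
\item For $j=m$, only $b_{m-1}^2=(2m-1)^2$ appears, while $\mathcal H^{(m)}_{mm}=(2m-1)^2+1$. The gap is again $1$.
\end{itemize}
These two gaps are exactly the correction $\diag(1,0,\ldots,0,1)$.

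For \eqref{eq:H-second-reduction}, compute $L^{(m)}(L^{(m)})^{\T}\in\R^{(m-1)\times(m-1)}$. Its entries are
\begin{equation*}
\bigl(L^{(m)}(L^{(m)})^{\T}\bigr)_{ii}=a_i^2+b_i^2,\qquad
\bigl(L^{(m)}(L^{(m)})^{\T}\bigr)_{i,i+1}=b_ia_{i+1}=(2i+1)(2m-2i-1).
\end{equation*}
Set $m'=m-1$. Then $\mathcal H^{(m-1)}_{i,i+1}=(2i+1)(2m'-2i+1)=(2i+1)(2m-2i-1)$, so the off-diagonal entries agree. For the diagonal we need
\begin{equation*}
(2m-2i+1)^2+(2i+1)^2-(2i-1)^2-(2m-2i-1)^2=8m .
\end{equation*}
This holds because $(2m-2i+1)^2-(2m-2i-1)^2=4(2m-2i)$ and $(2i+1)^2-(2i-1)^2=8i$, which sum to $8m$. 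No boundary corrections arise here, because every row of $L^{(m)}$ has both of its nonzero entries.

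The only step needing care is the index bookkeeping at the boundary rows and columns in the first identity. There, column $1$ of $L^{(m)}$ has only the entry $a_1$ and column $m$ has only the entry $b_{m-1}$; this is what produces the rank-two diagonal correction.
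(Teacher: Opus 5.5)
Your proposal is correct and follows essentially the same route as the paper: an entrywise comparison of symmetric tridiagonal matrices. In both, the single-entry first and last columns of $L^{(m)}$ produce the $\diag(1,0,\ldots,0,1)$ correction, and the difference-of-squares computation $(2m-2i+1)^2-(2m-2i-1)^2+(2i+1)^2-(2i-1)^2=8m$ gives the shift in $L^{(m)}(L^{(m)})^{\T}$.
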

	
\begin{proof} 
Let $m\ge2$.
For $2\le j\le m-1$, the $j$-th column of $L^{(m)}$	has exactly two nonzero entries:
\begin{equation*}
			L^{(m)}_{j-1,j}=2j-1,~L^{(m)}_{jj}=2m-2j+1.
\end{equation*}
Hence,
\begin{equation*}
		\bigl((L^{(m)})^{\mathrm T}L^{(m)}\bigr)_{jj}
		=
		(2j-1)^2+(2m-2j+1)^2
		=
		\mathcal H^{(m)}_{jj},
		\qquad 2\le j\le m-1.
\end{equation*}
The first and last columns of $L^{(m)}$ each have a single nonzero entry, equal to $2m-1$.
Hence, the two endpoint diagonal entries of $(L^{(m)})^{\T}L^{(m)}$ are both $(2m-1)^2$,
whereas those of $\mathcal H^{(m)}$ are both $(2m-1)^2+1$. 
Also,
\begin{equation*}
		\bigl((L^{(m)})^{\mathrm T}L^{(m)}\bigr)_{i(i+1)}
		=
		(2m-2i+1)(2i+1)
		=
		\mathcal H^{(m)}_{i(i+1)},
		\qquad 1\le i\le m-1.
\end{equation*}
Since both matrices are symmetric tridiagonal, \eqref{eq:H-first-reduction} follows.
To prove \eqref{eq:H-second-reduction}, we first compare the diagonal entries.
For $1\le i\le m-1$, direct calculation gives
\begin{equation*}
		\begin{aligned}
			\bigl(L^{(m)}(L^{(m)})^{\T}\bigr)_{ii}
			&=(2m-2i+1)^2+(2i+1)^2\\
			&=(2m-2i-1)^2+(2i-1)^2+8m\\
			&=\mathcal H^{(m-1)}_{ii}+8m.
		\end{aligned}
\end{equation*}
For $1\le i\le m-2$,
\begin{equation*}
	\bigl(L^{(m)}(L^{(m)})^{\mathrm T}\bigr)_{i(i+1)}
	=
	(2i+1)(2m-2i-1)
	=
	\mathcal H^{(m-1)}_{i(i+1)}.
\end{equation*}
Since $L^{(m)}(L^{(m)})^{\T}$ and $\mathcal H^{(m-1)}$ are symmetric tridiagonal, \eqref{eq:H-second-reduction} follows.
\end{proof}
	
By Lemma \ref{lem:H-reductions}, we obtain a lower bound for the eigenvalues of $\mathcal H^{(m)}$ in terms of those of $\mathcal H^{(m-1)}$.

\begin{lemma}
\label{lem:H-eigenvalue-lower-bound}
Let $\mathcal H^{(m)}$ and $\mathcal H^{(m-1)}$ be defined by~\eqref{eq:H-entries}. 
Then for any integer $m\geq2$,
\begin{equation}
		\lambda_i(\mathcal H^{(m)})
		\ge 8m+\lambda_i(\mathcal H^{(m-1)}),~
		1\le i\le m-1.
		\label{eq:H-eigenvalue-recurrence-bound}
\end{equation}
\end{lemma}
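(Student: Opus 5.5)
The proof combines the two identities of Lemma~\ref{lem:H-reductions} with the two standard tools, Lemma~\ref{lem:loewner-monotonicity} and Lemma~\ref{lem:rectangular-gram}. Fix $m\ge2$.

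\emph{Step 1: drop the diagonal correction.} By \eqref{eq:H-first-reduction},
\[
\mathcal H^{(m)}-(L^{(m)})^{\T}L^{(m)}=\diag(1,0,\ldots,0,1)\succeq0 .
\]
Lemma~\ref{lem:loewner-monotonicity} then gives
\[
\lambda_i(\mathcal H^{(m)})\ge\lambda_i\bigl((L^{(m)})^{\T}L^{(m)}\bigr),\qquad 1\le i\le m .
\]

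\emph{Step 2: pass from $(L^{(m)})^{\T}L^{(m)}$ to $L^{(m)}(L^{(m)})^{\T}$.} Apply Lemma~\ref{lem:rectangular-gram} with $n=m-1$, $A=L^{(m)}\in\R^{(m-1)\times m}$ and $B=(L^{(m)})^{\T}$. The $m$ eigenvalues of $(L^{(m)})^{\T}L^{(m)}$ are then the $m-1$ eigenvalues of $L^{(m)}(L^{(m)})^{\T}$, with multiplicities, together with one additional zero.

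Both matrices are positive semidefinite, so all these eigenvalues are nonnegative. Listing them in nonincreasing order, the extra zero can be placed last. Hence
\[
\lambda_i\bigl((L^{(m)})^{\T}L^{(m)}\bigr)=\lambda_i\bigl(L^{(m)}(L^{(m)})^{\T}\bigr),\qquad 1\le i\le m-1 .
\]
This ordering step is the only point needing care, and nonnegativity settles it.

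\emph{Step 3: use the second identity.} By \eqref{eq:H-second-reduction}, $L^{(m)}(L^{(m)})^{\T}=\mathcal H^{(m-1)}+8mI_{m-1}$. A scalar shift moves every eigenvalue by the same amount without changing their order, so
\[
\lambda_i\bigl(L^{(m)}(L^{(m)})^{\T}\bigr)=8m+\lambda_i(\mathcal H^{(m-1)}),\qquad 1\le i\le m-1 .
\]

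\emph{Conclusion.} Chaining Steps 1–3 yields \eqref{eq:H-eigenvalue-recurrence-bound}.
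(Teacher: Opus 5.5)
Your proof is correct and follows the paper's argument step for step: Loewner monotonicity applied to \eqref{eq:H-first-reduction}, then Lemma~\ref{lem:rectangular-gram} to transfer the eigenvalues from $(L^{(m)})^{\T}L^{(m)}$ to $L^{(m)}(L^{(m)})^{\T}$, then the scalar shift in \eqref{eq:H-second-reduction}. The only difference is cosmetic: the paper places the extra zero last by first showing $\rank L^{(m)}=m-1$, whereas your observation that all the eigenvalues are nonnegative already suffices.
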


\begin{proof}
Let $m\ge2$, and let $L^{(m)}$ be the matrix defined in \eqref{explicit-form-L}.
The first $m-1$ columns of $L^{(m)}$ form an upper triangular matrix with positive diagonal entries $2m-2i+1$, $1\le i\le m-1$.
Hence, $\rank L^{(m)}=m-1$.
Thus, $L^{(m)}(L^{(m)})^{\T}$ is positive definite, while $(L^{(m)})^{\T}L^{(m)}$ has $m-1$ positive eigenvalues and one zero eigenvalue.
By \eqref{eq:H-second-reduction} and Lemma~\ref{lem:rectangular-gram}, the eigenvalues of $(L^{(m)})^{\T}L^{(m)}$ satisfy
\begin{equation*}
		\begin{aligned}
			\lambda_i\bigl((L^{(m)})^{\T}L^{(m)}\bigr)
			&=8m+\lambda_i(\mathcal H^{(m-1)}),~1\le i\le m-1,\\
			\lambda_m\bigl((L^{(m)})^{\T}L^{(m)}\bigr)
			&=0.
		\end{aligned}
\end{equation*}
Moreover, \eqref{eq:H-first-reduction} gives
\begin{equation*}
		\mathcal H^{(m)}-(L^{(m)})^{\T}L^{(m)}
		=\diag(1,0,\ldots,0,1)
		\succeq0.
\end{equation*}
Therefore, Lemma~\ref{lem:loewner-monotonicity} yields
\begin{equation*}
		\begin{aligned}
			\lambda_i(\mathcal H^{(m)})
			&\ge
			\lambda_i\bigl((L^{(m)})^{\T}L^{(m)}\bigr)\\
			&=8m+\lambda_i(\mathcal H^{(m-1)}),
			\qquad 1\le i\le m-1.
		\end{aligned}
\end{equation*}
\end{proof}
	
From the relation between the eigenvalues of $\mathcal H^{(m)}$ and those of $\mathcal H^{(m-1)}$, we obtain the following result.
	
\begin{theorem}
		\label{thm:strict-interlacing}
Let $Q^{(m)}$ and $S^{(m)}$ be defined by~\eqref{eq:Q-definition} and \eqref{eq:S-definition}, respectively. 
Then for any integer $m\geq 2$
\begin{equation}	\label{eq:singular-interlacing}
			\sigma_i(S^{(m)})>\sigma_{i+1}(Q^{(m)}), 1\leq i\leq m-1.
\end{equation}
\end{theorem}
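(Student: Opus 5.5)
We reduce the claim to eigenvalues, as the paper already set up: by \eqref{eq:eigen-singular-order} it suffices to show
\begin{equation*}
\lambda_i(\mathcal H^{(m)})>\lambda_{i+1}(\mathcal G^{(m)}),\qquad 1\le i\le m-1,\ m\ge2.
\end{equation*}
The plan is induction on $m$, using the lower bound of Lemma~\ref{lem:H-eigenvalue-lower-bound} for $\mathcal H^{(m)}$ and the exact recurrence \eqref{eq:G-eigenvalue-recurrence}--\eqref{eq:G-smallest-eigenvalue} for $\mathcal G^{(m)}$. The key observation is that both families gain the same shift $8m$ when passing from $m-1$ to $m$. Consequently, an inequality at level $m-1$ propagates to level $m$, and only the bottom index $i=m-1$ is genuinely new at each step.

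\emph{Base case $m=2$.} We have $\mathcal H^{(2)}=\begin{bmatrix}10&9\\9&10\end{bmatrix}$, with eigenvalues $19$ and $1$. Also $\lambda_2(\mathcal G^{(2)})=16$ by \eqref{eq:G-smallest-eigenvalue}. Hence $\lambda_1(\mathcal H^{(2)})=19>16$.

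\emph{Inductive step.} Assume the claim holds for $m-1\ge2$, i.e.\ $\lambda_i(\mathcal H^{(m-1)})>\lambda_{i+1}(\mathcal G^{(m-1)})$ for $1\le i\le m-2$. Fix such an $i$. Lemma~\ref{lem:H-eigenvalue-lower-bound} gives $\lambda_i(\mathcal H^{(m)})\ge 8m+\lambda_i(\mathcal H^{(m-1)})$. Since $i+1\le m-1$, \eqref{eq:G-eigenvalue-recurrence} gives $\lambda_{i+1}(\mathcal G^{(m)})=8m+\lambda_{i+1}(\mathcal G^{(m-1)})$. Combining these with the hypothesis yields $\lambda_i(\mathcal H^{(m)})>\lambda_{i+1}(\mathcal G^{(m)})$.

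The remaining index $i=m-1$ requires $\lambda_{m-1}(\mathcal H^{(m)})>\lambda_m(\mathcal G^{(m)})=8m$. This is the step I expect to be the main obstacle, because the induction hypothesis gives no information about it. I would again use Lemma~\ref{lem:H-eigenvalue-lower-bound}, which gives
\begin{equation*}
\lambda_{m-1}(\mathcal H^{(m)})\ge 8m+\lambda_{m-1}(\mathcal H^{(m-1)}).
\end{equation*}
The last term is the smallest eigenvalue of $\mathcal H^{(m-1)}$, and it is strictly positive since $\mathcal H^{(m-1)}=S^{(m-1)}(S^{(m-1)})^{\T}$ with $S^{(m-1)}$ of full row rank, as noted in the introduction. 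Therefore $\lambda_{m-1}(\mathcal H^{(m)})>8m$.

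This same argument also shows that the case $m=2$ fits the general scheme: with $\mathcal H^{(1)}=(1)+(1)=2>0$ we get $\lambda_1(\mathcal H^{(2)})\ge 16+2>16$. So the bottom index always follows from positive definiteness alone, and the induction closes. Finally, taking square roots and applying \eqref{eq:eigen-singular-order} turns the eigenvalue inequalities into \eqref{eq:singular-interlacing}.
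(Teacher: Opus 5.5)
Your proof is correct and matches the paper's argument essentially step for step. It makes the same reduction to $\lambda_i(\mathcal H^{(m)})>\lambda_{i+1}(\mathcal G^{(m)})$, combines Lemma~\ref{lem:H-eigenvalue-lower-bound} with \eqref{eq:G-eigenvalue-recurrence} in the same induction for $i\le m-2$, and uses positive definiteness of $\mathcal H^{(m-1)}$ with \eqref{eq:G-smallest-eigenvalue} for $i=m-1$. Your extra remark that $\mathcal H^{(1)}$ is the $1\times 1$ matrix $2$ is a valid small simplification: for $m=2$ only the index $i=m-1$ occurs, so the bottom-index argument alone handles the base case.
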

	
\begin{proof}
By \eqref{eq:eigen-singular-order}, it suffices to prove
\begin{equation}
			\begin{aligned}
				&\lambda_i(\mathcal H^{(m)})
				>\lambda_{i+1}(\mathcal G^{(m)}), 1\leq i\leq m-1.
			\end{aligned}
			\label{eq:eigenvalue-interlacing}
\end{equation}
We prove this inequality by induction on $m$.
For $m=2$, 
\begin{equation*}
			\mathcal G^{(2)}=4\begin{bmatrix}
				5&1\\
				1&5
			\end{bmatrix},~
			\mathcal H^{(2)}=\begin{bmatrix}
				10&9\\
				9&10
			\end{bmatrix}.
\end{equation*}
Thus, 
\begin{equation*}
			\lambda_1(\mathcal H^{(2)})=19>
			\lambda_2(\mathcal G^{(2)})=16,
\end{equation*} 
which proves \eqref{eq:eigenvalue-interlacing} for $m=2$.
Let $m\ge3$ and assume that \eqref{eq:eigenvalue-interlacing} holds for $m-1$.
For $1\le i\le m-2$, the induction hypothesis gives
\begin{equation}
			\lambda_i(\mathcal H^{(m-1)})
			>\lambda_{i+1}(\mathcal G^{(m-1)}).
			\label{eq:induction-used}
\end{equation}
By Lemma~\ref{lem:H-eigenvalue-lower-bound}, \eqref{eq:induction-used}, and \eqref{eq:G-eigenvalue-recurrence}, we obtain
\begin{equation*}
		\begin{aligned}
			\lambda_i(\mathcal H^{(m)})
			&\ge
			8m+\lambda_i(\mathcal H^{(m-1)})\\
			&>
			8m+\lambda_{i+1}(\mathcal G^{(m-1)})\\
			&=
			\lambda_{i+1}(\mathcal G^{(m)}),~1\le i\le m-2.
		\end{aligned}
\end{equation*}
For $i=m-1$, the positive definiteness of $\mathcal H^{(m-1)}$ gives $\lambda_{m-1}(\mathcal H^{(m-1)})>0$. 
By Lemma~\ref{lem:H-eigenvalue-lower-bound} and \eqref{eq:G-smallest-eigenvalue},
\begin{equation*}
			\lambda_{m-1}(\mathcal H^{(m)})
			\geq8m+\lambda_{m-1}(\mathcal H^{(m-1)})
			>8m
			=\lambda_m(\mathcal G^{(m)}).
\end{equation*}
Thus, \eqref{eq:eigenvalue-interlacing} holds for $1\le i\le m-1$, which completes the induction.
Since $\mathcal H^{(m)}$ and $\mathcal G^{(m)}$ are positive definite, taking positive square roots in \eqref{eq:eigenvalue-interlacing} gives \eqref{eq:singular-interlacing}.
\end{proof}
	
\begin{remark}\label{c}
Since \eqref{eq:singular-interlacing} holds, the inequalities in \eqref{conjecture inequalities} follow.
Hence, Conjecture~\ref{conj:boros-interlacing} holds.
\end{remark}
	
Combining \eqref{lower order} and \eqref{eq:singular-interlacing} yields the following corollary for $m\ge2$. The case $m=1$ follows directly from \eqref{lower order}.
	
\begin{corollary}
\label{cor:strict-interlacing}
Let $Q^{(m)}$ and $S^{(m)}$ be defined by \eqref{eq:Q-definition} and \eqref{eq:S-definition}, respectively. 
Then
\begin{equation*}
\sigma_1(Q^{(m)})>\sigma_1(S^{(m)})>0
\end{equation*}
for $m=1$, while
\begin{equation*}
\sigma_1(Q^{(m)})>\sigma_1(S^{(m)})>
			\sigma_2(Q^{(m)})>\sigma_2(S^{(m)})>
			\cdots>\sigma_m(Q^{(m)})>
			\sigma_m(S^{(m)})>0
\end{equation*}
for $m\geq 2$.
\end{corollary}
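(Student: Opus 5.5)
The corollary is obtained by splicing together two chains of inequalities that are already available: the ``diagonal'' inequalities \eqref{lower order}, $\sigma_i(Q^{(m)})>\sigma_i(S^{(m)})$ for $1\le i\le m$, and the ``shifted'' inequalities of Theorem~\ref{thm:strict-interlacing}, $\sigma_i(S^{(m)})>\sigma_{i+1}(Q^{(m)})$ for $1\le i\le m-1$. Positivity of the last term comes from the full row rank of $S^{(m)}$.

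For $m=1$, \eqref{lower order} with $i=1$ gives $\sigma_1(Q^{(1)})>\sigma_1(S^{(1)})$. Here $S^{(1)}=[1\ \ 1]$, so $\sigma_1(S^{(1)})=\sqrt2>0$. More generally, $S^{(m)}$ has full row rank, so $\mathcal H^{(m)}=S^{(m)}(S^{(m)})^{\T}$ is positive definite and all $\sigma_i(S^{(m)})>0$.

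For $m\ge2$, I would write the chain term by term. For each $i$ with $1\le i\le m-1$, \eqref{lower order} gives $\sigma_i(Q^{(m)})>\sigma_i(S^{(m)})$, and Theorem~\ref{thm:strict-interlacing} gives $\sigma_i(S^{(m)})>\sigma_{i+1}(Q^{(m)})$. The final link $\sigma_m(Q^{(m)})>\sigma_m(S^{(m)})$ is \eqref{lower order} with $i=m$. The closing inequality $\sigma_m(S^{(m)})>0$ is positive definiteness of $\mathcal H^{(m)}$ as above.

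The only point needing care is that \eqref{lower order} is strict, whereas \cite{BorosRozsa2007} states it only in non-strict form. I would justify the strictness as follows. Since $\mathcal G^{(m)}-\mathcal H^{(m)}\succ0$, there is $\varepsilon>0$ with $\mathcal G^{(m)}-\varepsilon I_m\succeq\mathcal H^{(m)}$. Lemma~\ref{lem:loewner-monotonicity} then yields $\lambda_i(\mathcal G^{(m)})-\varepsilon\ge\lambda_i(\mathcal H^{(m)})$ for every $i$. Taking square roots via \eqref{eq:eigen-singular-order} gives the strict singular value inequalities. This step is routine once the stated positive definiteness is accepted, so no genuine obstacle is expected.
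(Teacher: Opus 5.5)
Your proposal is correct and matches the paper's argument: the paper also obtains the corollary by interleaving the strict inequalities \eqref{lower order} with those of Theorem~\ref{thm:strict-interlacing}, handling $m=1$ through \eqref{lower order} alone, and getting positivity from the positive definiteness of $\mathcal H^{(m)}$. Your $\varepsilon$-argument, which derives the strictness of \eqref{lower order} from $\mathcal G^{(m)}-\mathcal H^{(m)}\succ0$ and Lemma~\ref{lem:loewner-monotonicity}, validly fills in a step the paper only asserts.
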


\section*{AI statement}

The paper was written jointly by the two authors. ChatGPT 6 Astra was used only for language polishing and for screening counterexamples. 
The authors take responsibility for all results in the paper.

\end{document}